\definecolor{webgreen}{rgb}{0,.5,0}
\definecolor{webbrown}{rgb}{.6,0,0}
\newcommand{\seqnum}[1]{\href{https://oeis.org/#1}{\rm \underline{#1}}}
\begin{document}

\theoremstyle{plain}
\newtheorem{theorem}{Theorem}
\newtheorem{corollary}[theorem]{Corollary}
\newtheorem{lemma}[theorem]{Lemma}
\newtheorem{proposition}[theorem]{Proposition}

\theoremstyle{definition}
\newtheorem{definition}[theorem]{Definition}
\newtheorem{example}[theorem]{Example}
\newtheorem{conjecture}[theorem]{Conjecture}

\theoremstyle{remark}
\newtheorem{remark}[theorem]{Remark}

\title{Runs in Paperfolding Sequences}

\author{Jeffrey Shallit\footnote{Research supported by a grant from NSERC,
2024-03725.}\\
School of Computer Science\\
University of Waterloo\\
Waterloo, ON N2L 3G1 \\
Canada\\
\href{mailto:shallit@uwaterloo.ca}{\tt shallit@uwaterloo.ca}}

\maketitle

\begin{abstract}
The paperfolding sequences form an uncountable class of infinite
sequences over the alphabet $\{ -1, 1 \}$ that describe the sequence
of folds arising from iterated folding of a piece of paper,
followed by unfolding. In this note we observe that the sequence
of run lengths in such a sequence, as well as the starting and 
ending positions of the $n$'th run,
is $2$-synchronized and hence computable by a finite automaton.
As a specific consequence, we obtain the recent results of Bunder, Bates, and
Arnold, in much more generality, via a different approach.  We also prove results about the
critical exponent and subword complexity of these run-length sequences.
\end{abstract}

\section{Introduction}

Paperfolding sequences are sequences over the alphabet
$\{ -1, 1\}$ that arise from the iterated folding of a piece of paper,
introducing a hill ($+1$) or valley ($-1$) at each fold.  They are
admirably discussed, for example, in \cite{Davis&Knuth:1970,Dekking&MendesFrance&vanderPoorten:1982}.

The formal definition of a paperfolding sequence is based on a
(finite or infinite) sequence of {\it unfolding instructions} $\bf f$.
For finite sequences $\bf f$ 
we define
\begin{align}
P_\epsilon &= \epsilon \nonumber\\
P_{{\bf f} a} &= (P_{\bf f}) \  a \ ({-P_{{\bf f}}^R}) \label{fund}
\end{align}
for $a \in \{ -1, 1\}$ and ${\bf f} \in \{-1, 1\}^*$.
Here $\epsilon$ denotes the empty sequence of length $0$,
$-x$ changes the sign of each element of a sequence $x$, and
$x^R$ reverses the order of symbols in a sequence $x$.
An easy induction now shows that $|P_{\bf f}| = 2^{|{\bf f}|} - 1$, where
$|x|$ means the length, or number of symbols, of a sequence $x$.

Now let ${\bf f} = f_0 f_1 f_2 \cdots$ be an infinite sequence in
$\{-1, 1\}^\omega$.  It is easy to see that 
$P_{f_0 f_1 \cdots f_n}$ is a prefix of
$P_{f_0 f_1 \cdots f_{n+1}}$ for all $n \geq 0$, so there is a unique
infinite sequence of which all the $P_{f_0 f_1 \cdots f_n}$
are prefixes; we call this infinite sequence $P_{\bf f}$.

As in the previous paragraph, 
we always index the unfolding instructions starting
at $0$:  ${\bf f} = f_0 f_1 f_2 \cdots$.  Also by convention the
paperfolding sequence itself is indexed starting at $1$:
$P_{\bf f} = p_1 p_2 p_3 \cdots$.
With these conventions we immediately see that $P_{\bf f} [2^n] = p_{2^n} = f_n$
for $n \geq 0$.  Since there are a countable infinity of choices 
between $-1$ and $1$ for each unfolding instructions, there are
uncountably many infinite paperfolding sequences.

As an example let us consider the most famous such sequence, the
{\it regular paperfolding sequence}, where the
sequence of unfolding instructions is $1^\omega = 111\cdots$.  
Here we have, for example,
\begin{align*}
P_1 &= 1 \\
P_{11} &= 1 \, 1 \, (-1) \\
P_{111} &= 1 \, 1 \, (-1) \, 1 \, 1 \, (-1) \, (-1) .
\end{align*}
The first few values of the limiting infinite paperfolding
sequence $P_{1^\omega} [n]$ are given in Table~\ref{tab1}.
\begin{table}[htb]
\begin{center}
\begin{tabular}{c|ccccccccccccccccc} 
$n$ & 1 & 2 &  3 & 4 & 5 & 6 & 7 & 8 & 9 & 10 & 11 & 12 & 13 & 14 & 15 & 16 & $\cdots$\\
\hline
$P_{1^\omega} [n]$ & 1& 1&$-1$& 1& 1&$-1$&$-1$& 1& 1& 1&$-1$&$-1$& 1&$-1$&$-1$ & 1& $\cdots$
\end{tabular}
\end{center}
\caption{The regular paperfolding sequence.}
\label{tab1}
\end{table}

The paperfolding sequences have a number of interesting properties
that have been explored in a number of papers.  In addition to the
papers 
\cite{Davis&Knuth:1970,Dekking&MendesFrance&vanderPoorten:1982} already
cited, the reader can also see
Allouche \cite{Allouche:1992},
Allouche and Bousquet-M\'elou 
\cite{Allouche&Bousquet-Melou:1994a,Allouche&Bousquet-Melou:1994b},
and Go\v{c} et al.~\cite{Goc&Mousavi&Schaeffer&Shallit:2015}, to name 
just a few.

Recently Bunder et al.~\cite{Bunder&Bates&Arnold:2024} explored
the sequence of lengths of runs of the regular paperfolding sequence,
and proved some theorems about them.  Here by a ``run'' we mean a maximal
block of consecutive identical values.  Runs and run-length
encodings are a long-studied feature of sequences; see,
for example, \cite{Golomb:1966}.  
The run lengths $R_{1111}$ for the finite paperfolding sequence $P_{1111}$,
as well as the starting positions $S_{1111}$ and ending
positions $E_{1111}$ of the $n$'th run, are
given in Table~\ref{tab2}.  

\begin{table}[htb]
\begin{center}
\begin{tabular}{c|ccccccccccccccc} 
$n$ & 1 & 2 &  3 & 4 & 5 & 6 & 7 & 8 & 9 & 10 & 11 & 12 & 13 & 14 & 15 \\
\hline
$P_{1111} [n] $ & 1& 1&$-1$& 1& 1&$-1$&$-1$& 1& 1& 1&$-1$&$-1$& 1&$-1$&$-1$  \\
$R_{1111} [n] $ & 2&1&2&2&3&2&1&2& & & & & & &   \\
$S_{1111} [n] $ & 1& 3& 4& 6& 8&11&13&14&  &  &  &  &  &  &   \\
$E_{1111} [n] $ & 2& 3& 5& 7&10&12&13&15&  &  &  &  &  &  &   \\
\end{tabular}
\end{center}
\caption{Run lengths of the regular paperfolding sequence.}
\label{tab2}
\end{table}

As it turns out, however, {\it much\/} more general results,
applicable to {\it all\/} paperfolding sequences,
can be proven rather simply, in some cases making use of the
{\tt Walnut} theorem-prover \cite{Mousavi:2016}.  As shown in
\cite{Shallit:2023}, to use {\tt Walnut} it suffices to state a claim in
first-order logic, and then the prover can rigorously determine its truth or
falsity.

In order to use {\tt Walnut} to study the run-length sequences, these
sequences must be computable by a finite automaton (``automatic'').
Although the
paperfolding sequences themselves have this property (as shown,
for example, in \cite{Goc&Mousavi&Schaeffer&Shallit:2015}), 
there is no reason, a priori, to expect that the sequence of run lengths
will also have the property.
For example, the sequence of runs of the Thue-Morse sequence
${\bf t} = 0110100110010110\cdots$ is
$12112221121\cdots$, fixed point of the morphism $1 \rightarrow 121$,
$2 \rightarrow 12221$ \cite{Allouche&Arnold&Berstel&Brlek&Jockusch&Plouffe&Sagan:1995}, and is known to {\it not\/} be automatic \cite{Allouche&Allouche&Shallit:2006}.

The starting and ending positions 
of the $n$'th run are integer sequences.
In order to use {\tt Walnut} to study these, we would need these sequences
to be {\it synchronized\/} (see \cite{Shallit:2021}); that is, there would
need to be an automaton that reads the integers $n$ and $x$ in parallel and accepts
if $x$ is the starting (resp., ending) position of the $n$'th run.
But there is no reason, a priori, that the starting and ending 
positions of the $n$'th run of an arbitrary automatic sequence should be
synchronized. Indeed, if this were the case, and the length of runs were
bounded, then the length of these runs would
always be automatic, which as we have just seen is not the case for the
Thue-Morse sequence.

However, as we will see,
there is a single finite automaton that can compute
the run sequence $R_{\bf f}$ for
{\it all\/} paperfolding sequences simultaneously,
and the same thing applies to the sequences $S_{\bf f}$ and $E_{\bf f}$
of starting
and ending positions respectively.

In this paper we use these ideas to study the run-length sequences of
paperfolding sequences, explore their critical exponent and subword
complexity, and generalize the results of Bunder et al.~\cite{Bunder&Bates&Arnold:2024} on the continued fraction of a specific real number to uncountably
many real numbers.

\section{Automata for the starting and ending positions of runs}

We start with a basic result with a simple induction proof.
\begin{proposition}
Let $\bf f$ be a finite sequence of unfolding instructions of
length $n$.  Then the corresponding run-length sequence 
$R_{\bf f}$, as well as $S_{\bf f}$ and $E_{\bf f}$,
has length $2^{n-1}$.
\end{proposition}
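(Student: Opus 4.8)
The plan is to prove the three statements together by induction on $n = |{\bf f}|$, exploiting the recursive structure of the defining relation~\eqref{fund}, namely $P_{{\bf f}a} = (P_{\bf f})\, a\, (-P_{\bf f}^R)$. The base case $n = 1$ is a direct check: $P_{f_0} = f_0$ is a single symbol, so it constitutes one run, $R_{f_0} = (1)$, $S_{f_0} = (1)$, $E_{f_0} = (1)$, each of length $1 = 2^{0}$. For the inductive step, write ${\bf f} = {\bf g}a$ with $|{\bf g}| = n-1$, and suppose the claim holds for $\bf g$: the sequences $R_{\bf g}$, $S_{\bf g}$, $E_{\bf g}$ all have length $2^{n-2}$.

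First I would analyze how the runs of $P_{{\bf g}a}$ decompose in terms of those of $P_{\bf g}$. The word $P_{{\bf g}a}$ is the concatenation of three pieces: $P_{\bf g}$, then the single symbol $a$, then $-P_{\bf g}^R$. The middle piece $-P_{\bf g}^R$ has the same runs as $P_{\bf g}$ but in reverse order and with flipped signs, so on its own it again has exactly $2^{n-2}$ runs. The key observation is what happens at the two junctions. At the junction between $P_{\bf g}$ and the symbol $a$: since $|P_{\bf g}| = 2^{n-1}-1$ is odd, and by the well-known fact (recalled in the excerpt) that $p_{2^k} = f_k$, one can pin down the last symbol of $P_{\bf g}$; a short computation shows the last symbol of $P_{\bf g}$ equals $a$ exactly when — and regardless of the outcome here, the symbol $a$ either merges into the final run of $P_{\bf g}$ or starts a new length-one run. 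Symmetrically, the first symbol of $-P_{\bf g}^R$ is the negation of the last symbol of $P_{\bf g}$, so it differs from it; hence whatever $a$ does at the left junction, it does the opposite at the right junction. In every case the total run count is $2^{n-2} + 2^{n-2} = 2^{n-1}$: either $a$ merges left and splits nothing on the right (net $+0$ from the middle symbol but the two blocks of $2^{n-2}$ stay separated), or $a$ forms its own run and merges with exactly one neighbor — the bookkeeping yields $2^{n-1}$ either way.

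Having established $|R_{{\bf g}a}| = 2^{n-1}$, the lengths $|S_{{\bf g}a}| = |E_{{\bf g}a}| = 2^{n-1}$ follow immediately, since by definition $S_{\bf f}$ and $E_{\bf f}$ have exactly one entry per run. The main obstacle is the junction analysis: one must argue carefully that whatever merging occurs at the left junction is exactly compensated at the right junction, so that no run count is lost or double-counted. I expect this to come down to the elementary fact that the last letter of $P_{\bf g}$ and the first letter of $-P_{\bf g}^R$ are negatives of each other, which forces the two junctions to behave oppositely; everything else is routine concatenation arithmetic. (Alternatively, and perhaps more cleanly, one can avoid the case analysis by noting that the number of runs in a word $w$ equals $1$ plus the number of adjacent unequal pairs, i.e. $1 + |\{i : w_i \neq w_{i+1}\}|$, and then count these "descents/ascents" in $P_{{\bf g}a}$ directly: $P_{\bf g}$ contributes $2^{n-2}-1$, the reflected copy contributes another $2^{n-2}-1$, and the two new junctions contribute exactly one more between them — total $2^{n-1}-1$ unequal pairs, hence $2^{n-1}$ runs.)
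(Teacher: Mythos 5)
Your proposal is correct and takes essentially the same route as the paper: induction on $|{\bf f}|$ via the recursion $P_{{\bf g}a} = P_{\bf g}\, a\, (-P_{\bf g}^R)$, with the key observation that the last symbol of $P_{\bf g}$ and the first symbol of $-P_{\bf g}^R$ are negatives of each other, so $a$ extends exactly one adjacent run without creating a new run or merging two runs. Your parenthetical count of adjacent unequal pairs is a cleaner way to finish than the slightly garbled case discussion in your main paragraph, but it is the same underlying argument as the paper's.
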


\begin{proof}
The result is clearly true for $n=1$.  Now suppose
${\bf f}$ has length $n+1$ and write 
${\bf f} = {\bf g} a$ for $a \in \{ -1,1 \}$.
For the induction step,
we use Eq.~\eqref{fund}.  From it, we see that there are
$2^{n-1}$ runs in $P_{\bf g}$ and in $-P_{\bf g}^R$.
Since the last symbol of $P_{\bf g}$ is the negative of
the first symbol of $-P_{\bf g}^R$,
introducing $a$ between them extends the length of
one run, and doesn't affect the other.  Thus we do not introduce
a new run, nor combine two existing runs into one.
Hence the number of runs in $P_{\bf f} $ is $2^n$, as desired.
\end{proof}

\begin{remark}
Bunder et al.~\cite{Bunder&Bates&Arnold:2024} proved the same result
for the specific case of the regular paperfolding sequence.
\end{remark}

Next, we find automata for the starting and ending positions of
the runs.  Let us start with the starting positions.

The desired automaton $\tt sp$ takes three inputs in parallel.
The first input is a finite
sequence $\bf f$ of unfolding instructions, the second is the number $n$ written
in base $2$, and the third is some number $x$, also expressed in base $2$.
The automaton accepts if and only if $x = S_{\bf f} [n]$.

Normally we think of the unfolding
instructions as over the alphabet $\{ -1, 1 \}$, but it is useful to
be more flexible and also allow $0$'s, but only at the end;
these $0$'s are essentially disregarded.  
We need this because the parallel reading of inputs requires
that all three inputs be of the same length.  
Thus, for example, the sequences $-1, 1, 1, 0$ and $-1, 1, 1$ are
considered to specify the same paperfolding sequence, while
$-1, 0, 1, 1$ is not considered a valid specification.

Because we choose to let $f_0$ be the first symbol of the unfolding instructions, it is also useful to require that the inputs $n$ and $x$ mentioned above
be represented with the {\it least-significant digit first}.
In this representation, we allow an unlimited number of trailing zeros.

Finally, although we assume that $S_{\bf f}$ is indexed starting at position $1$,
it is useful to define $S_{\bf f}[0] = 0$ for all finite unfolding
instruction sequences $\bf f$.

To find the automaton computing the starting positions of runs, we use
a guessing procedure described in \cite{Shallit:2023}, based on a variant
of the Myhill-Nerode theorem.  Once a candidate automaton is guessed,
we can rigorously verify its correctness with {\tt Walnut}.

We will need one {\tt Walnut} automaton already introduced
in \cite{Shallit:2023}: {\tt FOLD}, and another one that we can define
via a regular expression.
\begin{itemize}
\item {\tt FOLD} takes two inputs, $\bf f$ and $n$.  If $n$ is
in the range $1 \leq n < 2^{|{\bf f}|}$, then it returns the 
$n$'th term of the paperfolding sequence specified by $f$.
\item {\tt lnk} takes two inputs, $f$ and $x$. It accepts if
$f$ is the valid code of a paperfolding sequence (that is, no
$0$'s except at the end) and $x$ is $2^t-1$, where $t$ is the
length of $f$ (not counting $0$'s at the end).
It can be created using the {\tt Walnut} command
\begin{verbatim}
reg lnk {-1,0,1} {0,1} "([-1,1]|[1,1])*[0,0]*":
\end{verbatim}

\end{itemize}

Our guessed automaton {\tt sp} has $17$ states.  We must
now verify that it is correct.  To do so we need to verify the following
things:
\begin{enumerate}
\item The candidate automaton {\tt sp} computes
 a partial function.  More precisely, for a given $\bf f$ and $n$, at most one
 input of the form $({\bf f},n,x)$ is accepted.
\item {\tt sp} accepts $({\bf f},0,0)$.
\item {\tt sp} accepts $({\bf f},1,1)$ provided $|{\bf f}| \geq 1$.
\item There is an $x$ such that {\tt sp} accepts $({\bf f},2^{|{\bf f}|-1},x)$.
\item {\tt sp} accepts no input of the form
$({\bf f},n,x)$ if $n > 2^{|{\bf f}|-1}$.
\item If {\tt sp} accepts $({\bf f},2^{|{\bf f}|-1},x)$ then
the symbols $P_{\bf f}[t]$ for $x \leq t < 2^{|{\bf f}|}$ are all the same.
\item Runs are nonempty: 
if {\tt sp} accepts $({\bf f},n-1,y)$ and $({\bf f},n,z)$ then $y<z$.
\item And finally, we check that if ${\tt sp}$ accepts
$({\bf f},n,x)$, then $x$ is truly the starting position of
the $n$'th run.  This means that all the symbols from the
starting position of the $(n-1)$'th run to $x-1$ are the
same, and different from $P_{\bf f}[x]$.
\end{enumerate}

We use the following {\tt Walnut} code to check each of these.  A brief
review of {\tt Walnut} syntax may be useful:
\begin{itemize}
\item {\tt ?lsd\_2} specifies that all numbers are represented with the
least-significant digit first, and in base $2$;
\item {\tt A} is the universal quantifier $\forall$ and {\tt E} is the existential quantifier $\exists$;
\item {\tt \&} is logical {\tt AND}, {\tt |} is logical {\tt OR}, {\tt \char'127} is logical {\tt NOT},
{\tt =>} is logical implication, {\tt <=>} is logical IFF, and
{\tt !=} is inequality;
\item {\tt eval} expects a quoted string representing a first-order assertion
with no free (unbound) variables, and returns {\tt TRUE} or {\tt FALSE};
\item {\tt def} expects a quoted string representing a first-order assertion
$\varphi$
that may have free (unbound) variables, and computes an automaton accepting
the representations of those tuples of variables that make $\varphi$ true,
which can be used later.
\end{itemize}
\begin{verbatim}
eval tmp1 "?lsd_2 Af,n ~Ex,y x!=y & $sp(f,n,x) & $sp(f,n,y)":
# check that it is a partial function
eval tmp2 "?lsd_2 Af,x $lnk(f,x) => $sp(f,0,0)":
# check that 0th run is at position 0; the lnk makes sure that
# the format of f is correct (doesn't have 0's in the middle of it.)
eval tmp3 "?lsd_2 Af,x ($lnk(f,x) & x>=1) => $sp(f,1,1)":
# check if code specifies nonempty string then first run is at position 1
eval tmp4 "?lsd_2 Af,n,z ($lnk(f,z) & z+1=2*n) => Ex $sp(f,n,x)":
# check it accepts n = 2^{|f|-1}
eval tmp5 "?lsd_2 Af,n,z ($lnk(f,z) & z+1<2*n) => ~Ex $sp(f,n,x)":
# check that it accepts no n past 2^{|f|-1}
eval tmp6 "?lsd_2 Af,n,z,x ($lnk(f,z) & 2*n=z+1 & $sp(f,n,x)) 
   => At (t>=x & t<z) => FOLD[f][x]=FOLD[f][t]":
# check last run is right and goes to the end of the finite
# paperfolding sequence specified by f
eval tmp7 "?lsd_2 Af,n,x,y,z ($lnk(f,z) & $sp(f,n-1,x) & 
   $sp(f,n,y) & 1<=n & 2*n<=z+1) => x<y":
# check that starting positions form an increasing sequence
eval tmp8 "?lsd_2 Af,n,x,y,z,t ($lnk(f,z) & n>=2 & $sp(f,n-1,y) &
   $sp(f,n,x) & x<=z & y<=t & t<x) => FOLD[f][x]!=FOLD[f][t]":
# check that starting position code is actually right
\end{verbatim}

{\tt Walnut} returns {\tt TRUE} for all of these, which gives us a proof by
induction on $n$ that indeed $x_n = S_{\bf f}[n]$.  

From the automaton for starting positions of runs, we can obtain the automaton
for ending positions of runs, {\tt ep}, using the following {\tt Walnut}
code:
\begin{verbatim}
def ep "?lsd_2 Ex $lnk(f,x) & ((2*n<=x-1 & $sp(f,n+1,z+1)) | 
   (2*n-1=x & z=x))":
\end{verbatim}

Thus we have proved the following result.
\begin{theorem}
There is a synchronized automaton of $17$ states {\tt sp} computing
$S_{\bf f}[n]$
and one of $13$ states {\tt ep} computing
$E_{\bf f}[n]$, for all paperfolding
sequences simultaneously.
\end{theorem}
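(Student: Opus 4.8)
The plan is to exhibit the two automata explicitly and then certify their correctness mechanically, so that essentially all of the argument reduces to finite computations that {\tt Walnut} can carry out. First I would obtain a candidate for {\tt sp} by running the Myhill--Nerode--style guessing procedure of \cite{Shallit:2023}: feed it enough triples $({\bf f},n,x)$ with $x = S_{\bf f}[n]$ --- using the conventions that $\bf f$ may be padded with trailing $0$'s, that $n$ and $x$ are written least-significant-digit first with arbitrarily many trailing zeros, and that $S_{\bf f}[0]=0$ --- until the procedure stabilizes on a deterministic automaton. This produces a $17$-state guess. Nothing so far guarantees the guess is correct, so the real content is in the verification.

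Next I would verify correctness by induction on $n$, where each step is a first-order statement over the class of paperfolding sequences and is therefore decidable by {\tt Walnut} using the already-available automata {\tt FOLD} and {\tt lnk}. Concretely, the checks are: (i) {\tt sp} defines a partial function of $({\bf f},n)$; (ii) the base cases --- {\tt sp} accepts $({\bf f},0,0)$, and accepts $({\bf f},1,1)$ whenever $|{\bf f}|\ge 1$; (iii) for $n = 2^{|{\bf f}|-1}$ some value $x$ is accepted, and no $n$ exceeding $2^{|{\bf f}|-1}$ is ever accepted --- here one invokes the preceding Proposition, which fixes the number of runs at exactly $2^{|{\bf f}|-1}$; (iv) the last accepted value indeed runs to the end of the finite word $P_{\bf f}$; (v) the accepted $x$-values are strictly increasing in $n$; and (vi) the defining property of a run start, namely that every symbol from the start of the $(n-1)$'st run through $x-1$ is equal, and different from $P_{\bf f}[x]$. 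Since {\tt Walnut} returns {\tt TRUE} on all of these, and together they assert ``if {\tt sp} is correct at $n-1$ then it is correct at $n$,'' the induction closes and {\tt sp} genuinely computes $S_{\bf f}[n]$ for every finite unfolding sequence $\bf f$.

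With {\tt sp} in hand, the automaton {\tt ep} for ending positions follows from a single relation: the $n$'th run ends one position before the $(n+1)$'st run begins, except for the final run, which ends at $2^{|{\bf f}|}-1$. Expressing this in first-order logic and letting {\tt Walnut} build and minimize the resulting automaton yields {\tt ep} with $13$ states, and its correctness is immediate from that of {\tt sp}. The state counts $17$ and $13$ are then simply read off the minimized machines.

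I expect the main obstacle to be the construction-and-verification of {\tt sp} itself. Automating the notion of ``the $n$'th run'' forces one to be scrupulous about the input encoding --- padding $\bf f$ with trailing zeros so all three tracks share a length, using the {\tt lnk} guard to reject codes with interior zeros, and accommodating the least-significant-digit-first representation with unbounded trailing zeros --- and then to package the inductive characterization of run boundaries as a collection of mutually reinforcing assertions whose conjunction really does determine {\tt sp} uniquely. Once those checks all pass, the theorem is established.
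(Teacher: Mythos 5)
Your proposal is correct and follows essentially the same route as the paper: guess the $17$-state synchronized automaton {\tt sp} via the Myhill--Nerode-style procedure, certify it with a battery of first-order {\tt Walnut} checks (partial function, base cases, exactly $2^{|{\bf f}|-1}$ runs, last run reaching the end, strict monotonicity, and the defining property of a run start) that together close an induction on $n$, and then define {\tt ep} from {\tt sp} by the relation ``the $n$'th run ends just before the $(n+1)$'st begins, except the last run ends at $2^{|{\bf f}|}-1$.'' This is precisely the paper's argument, including the encoding conventions (trailing zeros in $\bf f$, lsd-first inputs, the {\tt lnk} guard) and the way the $13$-state {\tt ep} is read off the minimized machine.
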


Using the automaton {\tt ep}, we are now able to prove the following new theorem. 
Roughly speaking, it says
that the ending position of the $n$'th run for the unfolding instructions
$\bf f$ is $2n - \epsilon_n$, where
$\epsilon_n \in \{0, 1 \}$, and we can compute $\epsilon_n$ by
looking at a sequence of unfolding instructions closely related to
$\bf f$.

\begin{theorem}
Let $\bf f$ be a finite sequence of unfolding instructions, of
length at least $2$.
Define a new sequence $\bf g$ of unfolding instructions as follows:
\begin{equation}
{\bf g} := \begin{cases}
	1 \ (-x), & \text{if ${\bf f} = 11x$;} \\
	(-1) \ (-x), & \text{if ${\bf f} = 1 (-1) x$;} \\
	(-1) \ x, & \text{if ${\bf f} = (-1) 1 x $; } \\
	1 \ x, & \text{if ${\bf f} = (-1) (-1) x$}.
	\end{cases}
\label{eq1}
\end{equation}
Then 
\begin{equation}
 E_{\bf f}[n] + \epsilon_n = 2n
\label{2n}
\end{equation}
for $1 \leq n < 2^{n-1}$,
where 
$$\epsilon_n = \begin{cases}
	0, & \text{if $P_{\bf g}[n] = 1$;} \\
	1, & \text{if $P_{\bf g}[n]=-1$.}
	\end{cases}
$$
Furthermore, if $\bf f$ is an infinite set of unfolding instructions,
then Eq.~\eqref{2n} holds for all $n \geq 1$.

\end{theorem}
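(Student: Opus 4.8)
The plan is to reduce Eq.~\eqref{2n} to a single {\tt Walnut} assertion, checked against the already-verified synchronized automaton {\tt ep} together with {\tt FOLD}. The only new ingredient needed is a way to speak about the value $P_{\bf g}[n]$ inside {\tt Walnut}. Writing ${\bf f} = f_0 f_1 f_2 \cdots$, the four cases of \eqref{eq1} say precisely that $g_0 = f_0 f_1$ and $g_i = -f_0\, f_{i+1}$ for $i \ge 1$. Feeding this into the elementary consequences of \eqref{fund} that $P_{\bf h}[2m] = P_{\sigma {\bf h}}[m]$ (where $\sigma$ deletes the first unfolding instruction) and $P_{\bf h}[2m-1] = h_0(-1)^{m-1}$, together with $P_{-{\bf h}} = -P_{\bf h}$, one obtains the closed form
\begin{equation*}
P_{\bf g}[n] = \begin{cases}
  f_0 f_1, & \text{if } n \equiv 1 \pmod 4;\\
  -f_0 f_1, & \text{if } n \equiv 3 \pmod 4;\\
  -f_0 \cdot P_{\bf f}[2n], & \text{if } n \text{ is even,}
\end{cases}
\end{equation*}
in which, since $f_0 = P_{\bf f}[1]$ and $f_1 = P_{\bf f}[2]$, every term is expressible with {\tt FOLD}; moreover, for $n$ in the stated range $1 \le n < 2^{|{\bf f}|-1}$, each index that gets queried (namely $1$, $2$, and $2n$) lies in $[1,\, 2^{|{\bf f}|})$, where {\tt FOLD} is meaningful.

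Next I would have {\tt Walnut} check, for all valid codes $\bf f$ of length at least $2$ (guarded by {\tt lnk}), all $n$ with $1 \le n < 2^{|{\bf f}|-1}$ (equivalently $2n < z$, where $z = 2^{|{\bf f}|}-1$ is the value supplied by {\tt lnk}), and all $x$ with ${\tt ep}({\bf f},n,x)$: that $P_{\bf g}[n] = 1 \Rightarrow x = 2n$ and $P_{\bf g}[n] = -1 \Rightarrow x + 1 = 2n$. Since {\tt ep} has been proved correct, a {\tt TRUE} answer establishes Eq.~\eqref{2n} for every finite $\bf f$. (If the closed form above is inconvenient to transcribe, an equivalent route is to build a small automaton recognizing the pairs $({\bf f}, {\bf g})$ and call {\tt FOLD}$[{\bf g}][n]$ directly; the map \eqref{eq1} is ``local'' --- it fixes $g_0$ from $f_0, f_1$ and then copies or negates a one-place shift of the tail --- so such an automaton is easy to build and verify.)

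For an infinite sequence $\bf f$ of unfolding instructions I would pass to the limit. Fix $n \ge 1$ and let ${\bf f}'$ be the length-$k$ prefix of $\bf f$, with $k$ chosen so that $2^{k-1} > 2n$. By the finite case, $E_{{\bf f}'}[n] = 2n - \epsilon'_n$, where $\epsilon'_n$ is read off from $P_{{\bf g}'}[n]$ and ${\bf g}'$ is the sequence built from ${\bf f}'$ by \eqref{eq1}. Because $E_{{\bf f}'}[n] \le 2n < 2^k - 1 = |P_{{\bf f}'}|$, the $n$'th run of $P_{{\bf f}'}$ is genuinely maximal and hence coincides with the $n$'th run of the limit sequence, so $E_{\bf f}[n] = E_{{\bf f}'}[n]$. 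Likewise ${\bf g}'$ is exactly the length-$(k-1)$ prefix of $\bf g$ (the transformation \eqref{eq1} commutes with taking prefixes), and $n \le 2^{k-1}-1 = |P_{{\bf g}'}|$, so $P_{{\bf g}'}[n] = P_{\bf g}[n]$ and $\epsilon'_n = \epsilon_n$; therefore Eq.~\eqref{2n} holds for all $n \ge 1$.

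I expect the main obstacle to be bookkeeping rather than any conceptual difficulty: pinning down the exact relationship between $\bf f$ and $\bf g$ (including the trailing-zero padding convention and the fact that $|{\bf g}| = |{\bf f}|-1$), and making sure that every index handed to {\tt FOLD} stays inside its legal range, so that the {\tt Walnut} verification really certifies the intended statement. Once those details are in order, the {\tt Walnut} check itself is routine.
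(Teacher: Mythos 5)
Your proposal is correct and takes essentially the same route as the paper: both reduce the claim to a single {\tt Walnut} verification combining the already-verified synchronized automaton {\tt ep} with {\tt FOLD}, the only difference being that the paper encodes the relation between $\bf f$ and $\bf g$ by a hand-built automaton {\tt assoc} and queries ${\tt FOLD}[g][n]$ directly (exactly the alternative you mention parenthetically), whereas you eliminate $\bf g$ via the correct decimation-based closed form $P_{\bf g}[n]$ in terms of $P_{\bf f}[1]$, $P_{\bf f}[2]$, $P_{\bf f}[2n]$ and $n \bmod 4$. Your explicit prefix-and-limit argument for infinite $\bf f$ spells out a step the paper leaves implicit, and it is sound.
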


\begin{proof}
We prove this using {\tt Walnut}. First, we need an automaton
{\tt assoc} that takes two inputs $\bf f$ and $\bf g$ in parallel,
and accepts if $\bf g$ is defined as in Eq.~\eqref{eq1}.
This automaton is depicted in Figure~\ref{fig3},
and correctness is left to the reader.
Now we use the following {\tt Walnut} code.
\begin{verbatim}
eval thm3 "?lsd_2 Af,g,y,n,t ($lnk(g,y) & $assoc(f,g) & y>=1 &
   n<=y & n>=1 & $ep(f,n,t)) =>
   ((FOLD[g][n]=@-1 & t+1=2*n)|(FOLD[g][n]=@1 & t=2*n))":
\end{verbatim}
And {\tt Walnut} returns {\tt TRUE}.
\begin{figure}[htb]
\begin{center}
\includegraphics[width=5.5in]{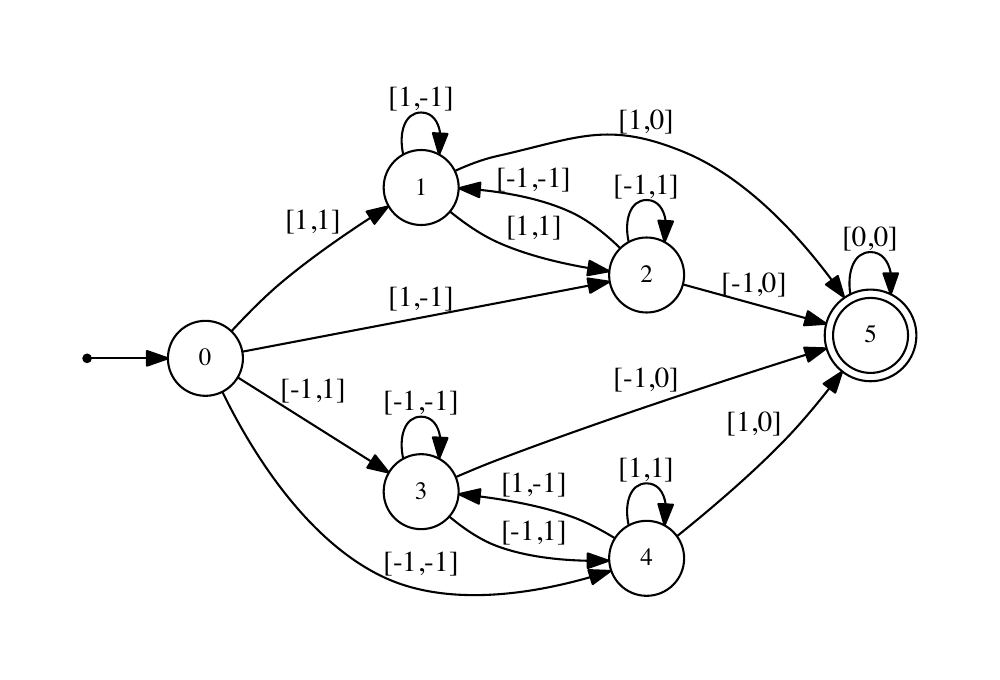}
\end{center}
\caption{The automaton {\tt assoc}.}
\label{fig3}
\end{figure}
\end{proof}

\section{Automaton for the sequence of run lengths}

Next we turn to the sequence of run lengths itself.
We can compute these from the automata for {\tt ep} and {\tt sp}.
\begin{verbatim}
def rl "?lsd_2 Ex,y $sp(f,n,x) & $ep(f,n,y) & z=1+(y-x)":
\end{verbatim}

\begin{proposition}
For all finite and infinite sequences of paperfolding instructions,
the only run lengths are $1,2,$ or $3$.
\label{prop4}
\end{proposition}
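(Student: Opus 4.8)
The plan is to prove the statement for finite $\bf f$ by induction on $|{\bf f}|$ using the recurrence \eqref{fund}, and then to deduce the infinite case by a short stabilization argument; since the automaton \texttt{rl} has just been constructed, one could alternatively discharge the finite part with a single \texttt{Walnut} query. The point to get right --- and the main obstacle --- is that the obvious induction hypothesis ``every run of $P_{\bf f}$ has length at most $3$'' is \emph{not} inductive: in \eqref{fund} the inserted symbol $a$ lengthens by one whichever of the two runs adjacent to it agrees with $a$, so a pre-existing run of length $3$ at that spot would produce a run of length $4$. The fix is to carry along sharper information about the two \emph{boundary} runs.

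Concretely, I would prove by induction on $n = |{\bf f}| \ge 1$ the strengthened claim: every run of $P_{\bf f}$ has length at most $3$, and the first and last runs of $P_{\bf f}$ each have length at most $2$. The cases $n=1$ and $n=2$ are checked directly ($P_{\bf f}$ then has length $1$ or $3$). For the step with $n \ge 3$, write ${\bf f} = {\bf g}a$, so $P_{\bf f} = P_{\bf g}\,a\,(-P_{\bf g}^R)$ with $|P_{\bf g}| = 2^{n-1}-1 \ge 3$; hence $P_{\bf g}$, and likewise $-P_{\bf g}^R$, has at least two runs, so its first run does not contain its last symbol. Consequently $a$ can affect only the last run of $P_{\bf g}$ and the first run of $-P_{\bf g}^R$, leaving the first run of $P_{\bf g}$ and the last run of $-P_{\bf g}^R$ untouched. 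Since the first symbol of $-P_{\bf g}^R$ is the negative of the last symbol of $P_{\bf g}$ (immediate from the definitions of reversal and negation), $a$ agrees with exactly one of the two runs it touches; in either subcase that run has length equal to the last run of $P_{\bf g}$, which is at most $2$ by the hypothesis, so after absorbing $a$ it has length at most $3$, while every other run of $P_{\bf f}$ is inherited from $P_{\bf g}$ or $-P_{\bf g}^R$ with its length unchanged. Hence all runs of $P_{\bf f}$ have length at most $3$. Finally the first run of $P_{\bf f}$ equals the first run of $P_{\bf g}$ (length at most $2$), and the last run of $P_{\bf f}$ equals the last run of $-P_{\bf g}^R$, which has the same length as the first run of $P_{\bf g}$, hence at most $2$. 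This closes the induction.

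For an infinite instruction sequence ${\bf f} = f_0 f_1 \cdots$, any fixed run of $P_{\bf f}$ lies inside some finite prefix $P_{f_0 \cdots f_k}$ of $P_{\bf f}$, and once $k$ is large enough it does not reach the right end of that prefix, so by the finite case it has length at most $3$. Therefore every run of every paperfolding sequence has length $1$, $2$, or $3$. As a mechanical cross-check, one can ask \texttt{Walnut} --- using the automaton \texttt{rl} --- to verify that for every valid code $\bf f$ and every $n$ in range the associated run length lies in $\{1,2,3\}$; I expect this returns \texttt{TRUE} immediately.
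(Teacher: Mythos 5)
Your proof is correct, but it takes a genuinely different route from the paper. The paper disposes of the finite case entirely mechanically: having built the synchronized automaton {\tt rl} from {\tt sp} and {\tt ep}, it issues a single {\tt Walnut} query asserting that every in-range run length is $1$, $2$, or $3$, and {\tt Walnut} returns {\tt TRUE} (the reduction from the infinite to the finite case is stated without further comment, as in your last paragraph). You instead give a self-contained combinatorial induction on Eq.~\eqref{fund}, and you correctly identify the key point that the naive hypothesis ``all runs have length at most $3$'' is not inductive; strengthening it to ``moreover the first and last runs have length at most $2$'' is exactly what is needed, and your case analysis at the junction symbol $a$ (it agrees with exactly one of the two adjacent boundary runs, whose length is at most $2$, and no runs merge across the junction) is sound. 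Your approach buys independence from the guessed-and-verified automata and yields extra structural information (the bound $2$ on boundary runs); the paper's approach buys brevity and fits the uniform automated framework used throughout. Two small points worth tightening: the inference ``$|P_{\bf g}|\ge 3$, hence $P_{\bf g}$ has at least two runs'' is not valid for arbitrary strings and should be justified by the induction hypothesis itself (a first run of length at most $2$ cannot exhaust a string of length at least $3$); and in the infinite case your phrasing presupposes the run is finite --- it is cleaner to argue that four consecutive equal symbols would lie well inside some finite prefix $P_{f_0\cdots f_k}$ and force a run of length at least $4$ there, contradicting the finite case, which simultaneously rules out infinite runs.
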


\begin{proof}
It suffices to prove this for the finite paperfolding sequences.
\begin{verbatim}
def prop4 "?lsd_2 Af,n,x,z ($lnk(f,x) & 1<=n & 2*n<=x+1
   & $rl(f,n,z)) => (z=1|z=2|z=3)":	
\end{verbatim}
And {\tt Walnut} returns {\tt TRUE}.
\end{proof}

\begin{remark}
Proposition~\ref{prop4} was proved by Bunder et al.~\cite{Bunder&Bates&Arnold:2024} for the specific case of the regular paperfolding sequence.
\end{remark}

We now use another feature of {\tt Walnut}, which is that we can turn
a synchronized automaton 
computing a function of finite range into an automaton 
returning the value of the function. 
The following code
\begin{verbatim}
def rl1 "?lsd_2 $rl(f,n,1)":
def rl2 "?lsd_2 $rl(f,n,2)":
def rl3 "?lsd_2 $rl(f,n,3)":
combine RL rl1=1 rl2=2 rl3=3:
\end{verbatim}
computes an automaton {\tt RL} of two inputs $\bf f$ and $n$, and
returns the value of the run-length sequence at index $n$
(either $1$, $2$, or $3$) for the unfolding instructions
$\bf f$.  This automaton has $31$ states.

We now turn to examining the factors of the run-length sequences of
paperfolding sequence.  Recall that a factor is a contiguous block
sitting inside a large sequence.  We start with overlaps.

Recall that an {\it overlap} is a string of the form $axaxa$, where $a$ is
a single letter, and $x$ is a possibly empty string.  For example,
the word {\tt entente} is an overlap from French. We now prove that the
sequence of run lengths in a paperfolding sequence contains no
overlaps.  

\begin{theorem}
The sequence of run lengths corresponding to every
finite or infinite paperfolding sequence is overlap-free.
\end{theorem}

\begin{proof}
It suffices to prove the result for every finite paperfolding sequence.
We can do this is as follows:
\begin{verbatim}
def chk_over "?lsd_2 ~Ef,i,n,x $lnk(f,x) & x>=1 & i>=1 & n>=1
   & i+2*n<=(x+1)/2 & At (t<=n) => RL[f][i+t]=RL[f][i+n+t]":
# asserts no overlaps
\end{verbatim}
And {\tt Walnut} returns {\tt TRUE}.
\end{proof}

We now consider {\tt squares}, that is, blocks of the form
$zz$, where $z$ is a nonempty sequence.
\begin{theorem}
The only possible squares occurring in the run lengths of a
paperfolding sequence
are $22$, $123123$, and $321321$.
\end{theorem}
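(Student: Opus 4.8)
The plan is to prove this the same way as the preceding two theorems: reduce to finite paperfolding sequences and then express the assertion as a first-order sentence that \texttt{Walnut} can decide, using the automaton \texttt{RL} together with the synchronized automata \texttt{sp}, \texttt{ep}, and \texttt{lnk} already constructed. First I would observe that it suffices to check finite paperfolding sequences: any square in a run-length sequence of an infinite paperfolding sequence already occurs, because of its finite length, in $R_{\bf f}$ for some sufficiently long finite prefix $\bf f$ of the unfolding instructions, and conversely the finite case subsumes the behaviour of the infinite one. So the goal becomes: in $R_{\bf f}$ for every valid finite $\bf f$, every factor of the form $zz$ with $z$ nonempty equals one of $22$, $123123$, $321321$.

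Next I would set up the \texttt{Walnut} encoding of a square in $R_{\bf f}$. A square of period $m\geq 1$ starting at position $i$ means $\mathtt{RL}[f][i+t] = \mathtt{RL}[f][i+m+t]$ for all $0\le t<m$, with the whole block $i, i+1,\dots, i+2m-1$ lying inside the valid index range $1\le \cdot < 2^{|{\bf f}|-1}$ (enforced via \texttt{lnk}, exactly as in the overlap-free proof, where the condition $i+2n\le (x+1)/2$ was used). Since Proposition~\ref{prop4} limits run lengths to $\{1,2,3\}$, and since the theorem claims the only squares are those three short ones, the key step is to show there are no squares of period $m\ge 4$ at all, and that the only squares of period $m\le 3$ are $22$, $123123$, $321321$. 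The cleanest route is a single \texttt{eval} asserting: there do NOT exist $f, i, m$ with \texttt{lnk}$(f,x)$ holding, $i\ge 1$, $m\ge 1$, $i+2m \le (x+1)/2$, the square condition $\forall t\,(t<m \Rightarrow \mathtt{RL}[f][i+t]=\mathtt{RL}[f][i+m+t])$, AND the negation of ``this square is one of the three allowed ones.'' The allowed-square clause can be spelled out concretely: either $m=1$ with $\mathtt{RL}[f][i]=2$ (giving $22$); or $m=3$ with $(\mathtt{RL}[f][i],\mathtt{RL}[f][i+1],\mathtt{RL}[f][i+2])$ equal to $(1,2,3)$ or $(3,2,1)$. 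If \texttt{Walnut} returns \texttt{TRUE}, the theorem is proved.

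The main obstacle I anticipate is purely one of quantifier structure and automaton size rather than mathematical difficulty: the period $m$ is an unbounded quantified variable appearing inside an inner universally quantified statement over $t$, and \texttt{Walnut} must build an automaton for $\forall t\,(t<m \Rightarrow \mathtt{RL}[f][i+t]=\mathtt{RL}[f][i+m+t])$ with $f$, $i$, $m$ all free; because \texttt{RL} has $31$ states and we are composing several such lookups with arithmetic on $i+t$ and $i+m+t$, the intermediate automata could get large, and one may need to massage the formula (for instance, handling the small cases $m\in\{1,2,3\}$ by an explicit disjunction and treating $m\ge 4$ separately, or introducing the square factor via \texttt{sp}/\texttt{ep} at the level of the paperfolding sequence itself rather than at the level of \texttt{RL}) to keep the computation feasible. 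A secondary subtlety is getting the index-range bookkeeping exactly right — the run-length sequence $R_{\bf f}$ has length $2^{|{\bf f}|-1}$ by the Proposition, so the condition confining $i+2m-1$ to that range must match the $\mathtt{lnk}$-based encoding of $2^{|{\bf f}|}-1$ used elsewhere, just as in the overlap-free argument. Once those bookkeeping details mirror the earlier proofs, the verification should go through and \texttt{Walnut} will return \texttt{TRUE}.
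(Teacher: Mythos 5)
Your proposal is correct and takes essentially the same route as the paper: reduce to finite paperfolding sequences, encode a square of unbounded period $m$ over {\tt RL} with the {\tt lnk}-based range bookkeeping, and have {\tt Walnut} verify that only the three listed squares can occur. The paper merely splits your single assertion into three queries (first that any square has order $1$ or $3$, then that the order-$1$ squares are $22$ and the order-$3$ squares are $123123$ or $321321$), which is exactly the case split you anticipated as a fallback.
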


\begin{proof}
We start by showing that the only squares are of order $1$ or $3$.
\begin{verbatim}
def chk_sq1 "?lsd_2 Af,i,n,x ($lnk(f,x) & x>=1 & i>=1 & n>=1
   & i+2*n-1<=(x+1)/2 & At (t<n) => RL[f][i+t]=RL[f][i+n+t]) => (n=1|n=3)":
\end{verbatim}
Next we check that the only square of order $1$ is $22$.
\begin{verbatim}
def chk_sq2 "?lsd_2 Af,x,i ($lnk(f,x) & x>=1 & i>=1 &
   i+1<=(x+1)/2 & RL[f][i]=RL[f][i+1]) => RL[f][i]=@2":
\end{verbatim}
Finally, we check that the only squares of order $3$ are $123123$ and
$321321$.
\begin{verbatim}
def chk_sq3 "?lsd_2 Af,x,i ($lnk(f,x) & x>=1 & i>=1 & 
   i+5<=(x+1)/2 & RL[f][i]=RL[f][i+3] & RL[f][i+1]=RL[f][i+4]
   & RL[f][i+2]=RL[f][i=5]) => ((RL[f][i]=@1 & RL[f][i+1]=@2
   & RL[f][i+2]=@3)|(RL[f][i]=@3 & RL[f][i+1]=@2 & RL[f][i+2]=@1))":
\end{verbatim}
\end{proof}

\begin{proposition}
In every finite paperfolding sequence formed by $7$ or more unfolding
instructions, the squares $22$, $123123$, and $321321$ are all present in
the run-length sequence.
\end{proposition}

We now turn to palindromes.
\begin{theorem}
The only palindromes that can occur
in the run-length sequence of a paperfolding
sequence are $1,2,3, 22, 212, 232, 12321, $ and $32123$.
\end{theorem}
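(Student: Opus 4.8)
Given the pattern established throughout this paper — every combinatorial claim about run-length sequences is verified by translating it into a first-order statement over all finite paperfolding sequences and handing it to {\tt Walnut} — the plan is to do exactly the same for the palindrome characterization. The key observation is that a factor of length $m$ occurring at position $i$ in the run-length sequence $R_{\bf f}$ is a palindrome precisely when $\mathtt{RL}[f][i+t] = \mathtt{RL}[f][i+m-1-t]$ for all $t$ with $0 \le t \le m-1$ (equivalently, for $0 \le 2t \le m-1$), and this condition is expressible with the automaton {\tt RL} already constructed. So the whole theorem reduces to two {\tt Walnut} checks.

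First I would bound the length of palindromic factors: write a {\tt def} (or {\tt eval} of a negated existential) asserting that there is no finite ${\bf f}$, no starting index $i \ge 1$, and no length $m \ge 6$ with $i + m - 1 \le (x+1)/2$ (where $x = 2^{|{\bf f}|}-1$ via {\tt lnk}, so that $(x+1)/2 = 2^{|{\bf f}|-1}$ is the number of runs) such that $\mathtt{RL}[f][i+t] = \mathtt{RL}[f][i+m-1-t]$ holds for every $t \le m-1$. If {\tt Walnut} returns {\tt TRUE}, then every palindrome in every run-length sequence has length at most $5$. The bound $m \le 5$ is forced by the square structure already proved: a palindrome of length $6$ would contain, among other things, configurations that generate forbidden squares or run values other than $\{1,2,3\}$ — but we don't need to argue this by hand, since the single {\tt Walnut} call settles it. Actually it is cleanest to first verify $m \le 6$ cannot yield new palindromes and then explicitly enumerate; but a direct check of ``no palindrome of length $\ge 6$'' is simplest.

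Second, for each of the finitely many lengths $1,2,3,4,5$ I would write a {\tt Walnut} statement enumerating which patterns of those lengths actually arise as palindromic factors, and check that the only ones are $1$, $2$, $3$, $22$, $212$, $232$, $12321$, and $32123$. Concretely: for length $1$, every run value in $\{1,2,3\}$ occurs (cite the earlier propositions), all of which are trivially palindromes; for length $2$, the palindromes are exactly the squares of order $1$, already characterized as $22$; for length $3$, assert that whenever $\mathtt{RL}[f][i] = \mathtt{RL}[f][i+2]$ the triple $(\mathtt{RL}[f][i], \mathtt{RL}[f][i+1], \mathtt{RL}[f][i+2])$ is one of $212$, $232$; and similarly, with automaton-expressible conditions $\mathtt{RL}[f][i]=\mathtt{RL}[f][i+3]$, $\mathtt{RL}[f][i+1]=\mathtt{RL}[f][i+2]$, pin down the length-$5$ palindromes (noting no length-$4$ palindrome $abba$ survives, since that forces a square $aa$ in the middle with $a\ne 2$ sometimes, or is excluded outright by {\tt Walnut}). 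Each of these is a bounded-quantifier first-order sentence with {\tt RL}, {\tt lnk} as building blocks, so {\tt Walnut} decides it.

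The main obstacle is not conceptual but bookkeeping: getting the index arithmetic right in the least-significant-digit base-$2$ representation (the off-by-one handling of $2^{|{\bf f}|-1}$ as the run count via {\tt lnk}, and the centering condition for even versus odd palindrome length), and making sure the ``length $\ge 6$'' universal/existential formula is stated so that {\tt Walnut}'s automaton construction terminates at reasonable size — the length $m$ is itself a quantified variable, so one must phrase the palindrome condition as ``for all $t$, ($t \le m-1$) implies equality,'' which {\tt Walnut} handles but which produces a larger intermediate automaton than the fixed-length checks. As elsewhere in the paper, once the formulas are correctly stated, {\tt Walnut} returning {\tt TRUE} constitutes the proof, with the finitely many surviving patterns read off directly.
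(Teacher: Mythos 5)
Your plan is sound, but it is not the route the paper takes. The paper does not verify this theorem with \texttt{Walnut} formulas over the \texttt{RL} automaton at all: it argues by hand that it suffices to examine factors of the run-length sequences of length at most $7$ (implicitly, because any palindrome of length $\geq 6$ contains a central palindrome of length $6$ or $7$), converts these to factors of the underlying paperfolding sequences of length at most $2+3\cdot 7=23$ (runs have length at most $3$), invokes the appearance-function bound of \cite[Thm.~12.2.2]{Shallit:2023} to conclude that all such factors appear in prefixes of length $13\cdot 23=299$, and then simply enumerates the palindromic factors in the run-length sequences of all $2^9$ finite paperfolding sequences given by instruction strings of length $9$. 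Your approach instead stays entirely inside \texttt{Walnut}, in the same style as the overlap and square theorems: a negated existential with a quantified palindrome length $m\geq 6$ (the condition $\mathtt{RL}[f][i+t]=\mathtt{RL}[f][i+m-1-t]$ for all $t<m$ is indeed first-order expressible, with the usual care about index arithmetic and the bound $i+m-1\leq (x+1)/2$ from \texttt{lnk}), followed by fixed-length characterizations for lengths $1$ through $5$. Both are valid proofs of the ``only these palindromes'' claim. What your version buys is uniformity and machine-checked rigor over all paperfolding sequences simultaneously, with no reliance on the appearance bound or on the implicit trimming argument; what the paper's version buys is a very small computation (a finite enumeration over $511$-symbol prefixes) at the price of the hand reduction, and it also exhibits that each listed palindrome actually occurs, which your length-$\leq 5$ checks establish only if you add occurrence queries (a minor addition, and not strictly required by the statement).
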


\begin{proof}
It suffices to check the factors of the run-length sequences of length
at most $7$.  These correspond to factors of length at most $2+3\cdot 7 = 23$,
and by the bounds on the ``appearance'' function given
in Theorem~\cite[Thm 12.2.2]{Shallit:2023}, to guarantee we have seen
all of these factors, it suffices to look at prefixes of paperfolding
sequences of length at most $13 \cdot 23 = 299$.
(Also see \cite{Burns:2022}.)
Hence it suffices
to look at all $2^9$ finite paperfolding sequences of length $2^9 - 1 = 511$
specified by instructions of length $9$.  When we do this, the only
palindromes we find are those in the statement of the theorem.
\end{proof}

Recall that the {\it subword complexity} of an infinite sequence is the
function that counts, for each $n \geq 0$, the number of distinct factors
of length $n$ appearing in it.  The subword complexity of the paperfolding
sequences was determined by Allouche 
\cite{Allouche:1992}.

\begin{theorem}
The subword complexity of the run-length sequence of an infinite paperfolding
sequence is $4n+4$ for $n \geq 6$.
\end{theorem}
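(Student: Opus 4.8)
The plan is to reduce the statement to a finite computation with the automaton $\tt RL$. Throughout, write $\rho_{\bf w}(n)$ for the number of distinct factors of length $n$ of an infinite word $\bf w$. The structural fact I will lean on is the elementary identity
\[
\rho_{\bf w}(n+1) - \rho_{\bf w}(n) \;=\; \sum_{u}(\deg^{+}_{\bf w}(u) - 1),
\]
where the sum ranges over the factors $u$ of $\bf w$ of length $n$ and $\deg^{+}_{\bf w}(u)$ is the number of distinct letters $a$ for which $ua$ is again a factor, so that only the \emph{right-special} factors ($\deg^{+}_{\bf w}(u)\ge 2$) contribute. Hence it suffices to show that for every infinite sequence of unfolding instructions $\bf f$, the run-length sequence $R_{\bf f}$ has, for each $n\ge 6$, exactly four right-special factors of length $n$, each with exactly two right extensions, together with the base value $\rho_{R_{\bf f}}(6)=28$; the claimed formula $\rho_{R_{\bf f}}(n)=4n+4$ for $n\ge 6$ then follows by induction on $n$.

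First I would express in {\tt Walnut}, relative to a finite paperfolding sequence $P_{\bf f}$, the three predicates in the free variables ${\bf f},n,i$: ``the factor of $R_{\bf f}$ of length $n$ starting at position $i$ occurs at no position $j$ with $1\le j<i$'' (a first occurrence), ``this factor is right-special'', and ``this factor has exactly two right extensions''. Each of these is first-order over $\tt RL$ and the bound $\tt lnk$; the one point requiring care is that every position consulted — including the extra symbol needed to witness a right extension and every earlier position needed to certify a first occurrence — must lie inside $R_{\bf f}$, which has length $2^{|{\bf f}|-1}$. Using the appearance-function bound of \cite[Thm.~12.2.2]{Shallit:2023} (every length-$k$ factor of a paperfolding sequence already occurs in its prefix of length $13k$) together with the fact that a length-$n$ factor of $R_{\bf f}$ is carried by a window of $P_{\bf f}$ of length at most $3n+2$ (run lengths being at most $3$), one checks that there is an explicit linear function $L(n)$ such that whenever $2^{|{\bf f}|-1}\ge L(n)$ these truncated notions agree with the corresponding notions in any infinite extension of $P_{\bf f}$.

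Then I would ask {\tt Walnut} to verify the single sentence asserting: for all $n\ge 6$ and all finite $\bf f$ with $2^{|{\bf f}|-1}\ge L(n)$, there are exactly four first-occurrence positions carrying a right-special factor of length $n$ (``exactly four'' encoded in the standard way, by naming four such positions and asserting there is no fifth), and each of those four factors has exactly two right extensions. With the identity above this yields $\rho_{R_{\bf f}}(n+1)-\rho_{R_{\bf f}}(n)=4$ for all $n\ge 6$ and all infinite $\bf f$. The base case is a finite computation: since $13(3\cdot 6+2)$ is small, it suffices to enumerate the run-length sequences of the $2^k$ finite paperfolding sequences specified by unfolding instructions of a modest fixed length $k$ (nine is more than enough) and to count their length-$6$ factors; each such sequence yields $28 = 4\cdot 6+4$, which (again by the appearance bound) is the length-$6$ factor count of every infinite $R_{\bf f}$. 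This completes the induction.

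The main obstacle is the bookkeeping in the second step: one must pin down the explicit threshold $L(n)$ and be sure it forces the truncated predicates to coincide with the infinite-word ones, and one must know in advance that the true branching pattern really is ``four right-special factors, each binary'' for all $n\ge 6$. Were some right-special factor ternary for infinitely many $n$ — which the rigid structure of these sequences (overlap-free, squares confined to $22$, $123123$, $321321$) makes implausible — the sentence above would return {\tt FALSE} and would have to be replaced by one that splits the right-special factors according to branching degree and checks that $\sum_u(\deg^{+}(u)-1)=4$. An alternative that sidesteps the increment argument is to restrict $\tt RL$ to instructions that are all $1$'s, obtaining the two-automatic run-length sequence $R_{1^\omega}$ of the regular paperfolding sequence (legitimate here because all paperfolding sequences share the same factor set \cite{Allouche:1992}, hence so do their run-length sequences), apply the linear-representation machinery of {\tt Walnut} to the first-occurrence predicate to compute a linear representation for $n\mapsto\rho_{R_{1^\omega}}(n)$, and check it agrees with the trivial linear representation of $4n+4$ for $n\ge 6$; this works but is less transparent at the verification step.
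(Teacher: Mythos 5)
Your main argument is essentially the paper's proof: reduce the complexity increment to counting right-special factors of the run-length sequence, verify with {\tt Walnut} (over all finite instruction sequences, using the appearance bound $13n$ of \cite[Thm.~12.2.2]{Shallit:2023} so that the finite checks govern every infinite extension) that for $n\geq 6$ there are exactly four right-special factors and that no factor of length $\geq 2$ extends by all three letters, then anchor the induction with the finite computation $\rho(6)=28$. Your worry about a ternary right-special factor is exactly what the paper's first check ({\tt three}) disposes of, and your use of first-occurrence positions to count \emph{distinct} right-special factors is if anything a cleaner encoding than the paper's.

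One caution about your proposed ``alternative'': its premise is false. Allouche \cite{Allouche:1992} proves that all paperfolding sequences have the same subword \emph{complexity}, not the same \emph{set} of factors; distinct paperfolding sequences do not in general share factor sets (for instance, the length-$15$ prefix of the sequence with unfolding instructions $(-1)1^\omega$ is not a factor of the regular paperfolding sequence, as one sees by comparing the forced alternation on odd positions). Hence restricting {\tt RL} to the instructions $1^\omega$ and computing a linear representation would only prove the formula for the run-length sequence of the regular paperfolding sequence, not for all of them; the quantification over all $\bf f$ in your main route (and in the paper) is genuinely needed.
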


\begin{proof}
First we prove that if $x$ is a factor of a run-length sequence, and
$|x| \geq 2$, then $xa$ is a factor of the same sequence for at most
two different $a$.
\begin{verbatim}
def faceq "?lsd_2 At (t<n) => RL[f][i+t]=RL[f][j+t]":
eval three "?lsd_2 Ef,i,j,k,n n>=2 & i>=1 & RL[f][i+n]=@1 &
   RL[f][j+n]=@2 & RL[f][k+n]=@3 & $faceq(f,i,j,n) & $faceq(f,j,k,n)":
\end{verbatim}

Next we prove that if $|x| \geq 5$, then exactly four factors of a run-length
sequence are right-special (have a right extension by two different letters). 
\begin{verbatim}
def rtspec "?lsd_2 Ej,x $lnk(f,x) & i+n<=x & i>=1 &
   $faceq(f,i,j,n) & RL[f][i+n]!=RL[f][j+n]":
eval nofive "?lsd_2 ~Ef,i,j,k,l,m,n n>=5 & i<j & j<k & k<l
   & l<m & $rtspec(f,i,n) & $rtspec(f,j,n) & $rtspec(f,k,n) &
   $rtspec(f,l,n) & $rtspec(f,m,n)":
eval four "?lsd_2 Af,n,x ($lnk(f,x) & x>=127 & n>=6 &
   13*n<=x) => Ei,j,k,l i>=1 & i<j & j<k & k<l &
   $rtspec(f,i,n) & $rtspec(f,j,n) & $rtspec(f,k,n) & $rtspec(f,l,n)":
\end{verbatim}
Here {\tt nofive} shows that no length 5 or larger has five
or more right-special factors of that length, and every length $6$ or larger
has exactly four such right-special factors.  Here we have used
\cite[Thm.~12.2.2]{Shallit:2023}, which guarantees that every factor
of length $n$ of a paperfolding sequence can be found in a prefix
of length $13n$.   Thus we see if there are $t$ factors of length $n \geq 6$
then there are $t+4$ factors of length $n+1$:  the $t$ arising from those
that can be extended in exactly one way to the right, and the $4$ additional
from those that have two extensions.

Since there are $28$ factors of every run-length sequence of length $6$ 
(which we can check just by enumerating them, again
using \cite[Thm.~12.2.2]{Shallit:2023}), the result now
follows by a trivial induction.
\end{proof}

\section{The regular paperfolding sequence}

In this section we specialize everything we have done so far to the
case of a single infinite
paperfolding sequence, the so-called regular paperfolding
sequence, where the folding instructions are $1^\omega = 111\cdots$.
In \cite{Bunder&Bates&Arnold:2024},
the sequence $2122321231232212\cdots$
of run lengths 
for the regular paperfolding
sequence was called $g(n)$, and
the sequence 
$2, 3, 5, 7, 10, 12, 13, 15, 18, 19, 21,\ldots$
of ending positions of runs was called $h(n)$.
We adopt their notation.
Note that $g(n)$ forms sequence \seqnum{A088431} in the
On-Line Encyclopedia of Integer Sequences (OEIS) \cite{oeis}, while
the sequence of starting positions of runs
$1, 3, 4, 6, 8, 11, 13, 14, 16,\ldots$ is \seqnum{A371594}.

In this case we can compute an automaton computing
the $n$'th term of the run length sequence $g(n)$ as follows:
\begin{verbatim}
reg rps {-1,0,1} {0,1} "[1,1]*[0,0]*":
def runlr1 "?lsd_2 Ef,x $rps(f,x) & n>=1 & n<=x/2 & RL[f][n]=@1":
def runlr2 "?lsd_2 Ef,x $rps(f,x) & n>=1 & n<=x/2 & RL[f][n]=@2":
def runlr3 "?lsd_2 Ef,x $rps(f,x) & n>=1 & n<=x/2 & RL[f][n]=@3":
combine RLR runlr1=1 runlr2=2 runlr3=3:
\end{verbatim}

The resulting automaton is depicted in Figure~\ref{fig4}.
\begin{figure}[htb]
\begin{center}
\includegraphics[width=5.5in]{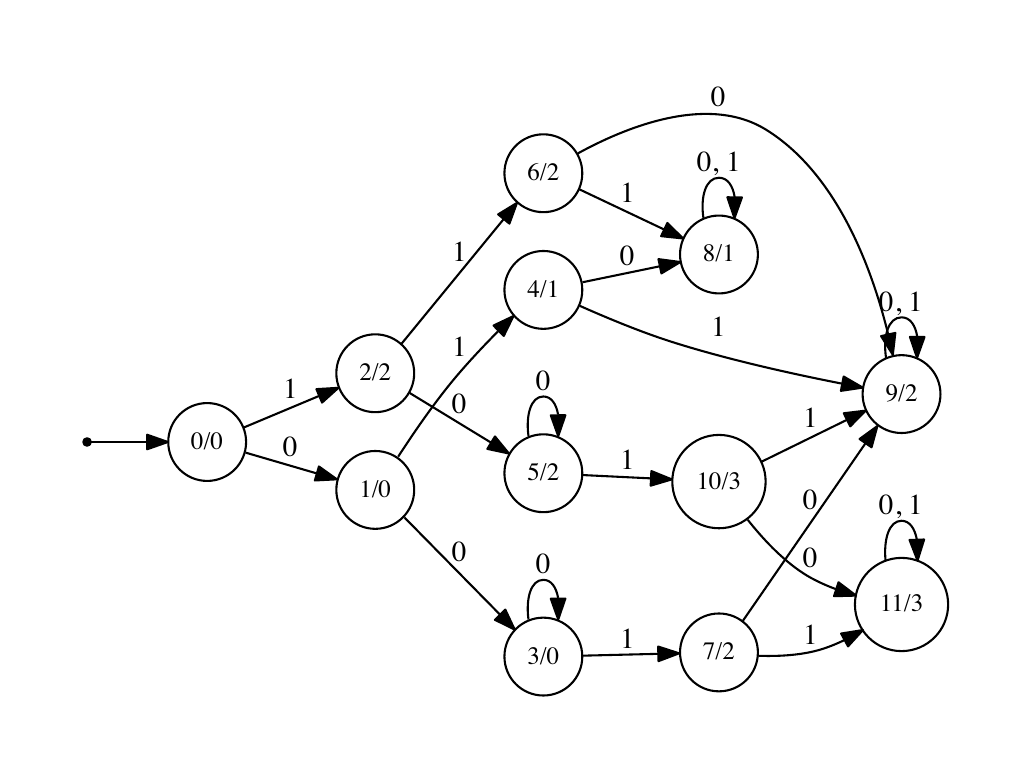}
\end{center}
\caption{The lsd-first automaton {\tt RLR}.}
\label{fig4}
\end{figure}

Casual inspection of this automaton immediately proves many
of the results of
\cite{Bunder&Bates&Arnold:2024}, such as 
their multi-part Theorems 2.1 and 2.2. To name just one example,
the sequence $g(n)$ takes the value $1$ iff
$n \equiv 2, 7$ (mod $8$).  For their other results, we can use
{\tt Walnut} to prove them.

We can also specialize {\tt sp} and {\tt ep} to the case of the regular
paperfolding sequence, as follows:
\begin{verbatim}
reg rps {-1,0,1} {0,1} "[1,1]*[0,0]*":
def sp_reg "?lsd_2 (n=0&z=0) | Ef,x $rps(f,x) & n>=1 & n<=x/2 & $sp(f,n,z)":
def ep_reg "?lsd_2 (n=0&z=0) | Ef,x $rps(f,x) & n>=1 & n<=x/2 & $ep(f,n,z)":
\end{verbatim}
These automata are depicted in Figures~\ref{fig7} and \ref{fig8}.
\begin{figure}[htb]
    \centering
    \begin{minipage}{0.47\textwidth}
        \centering
        \includegraphics[height=1.7in]{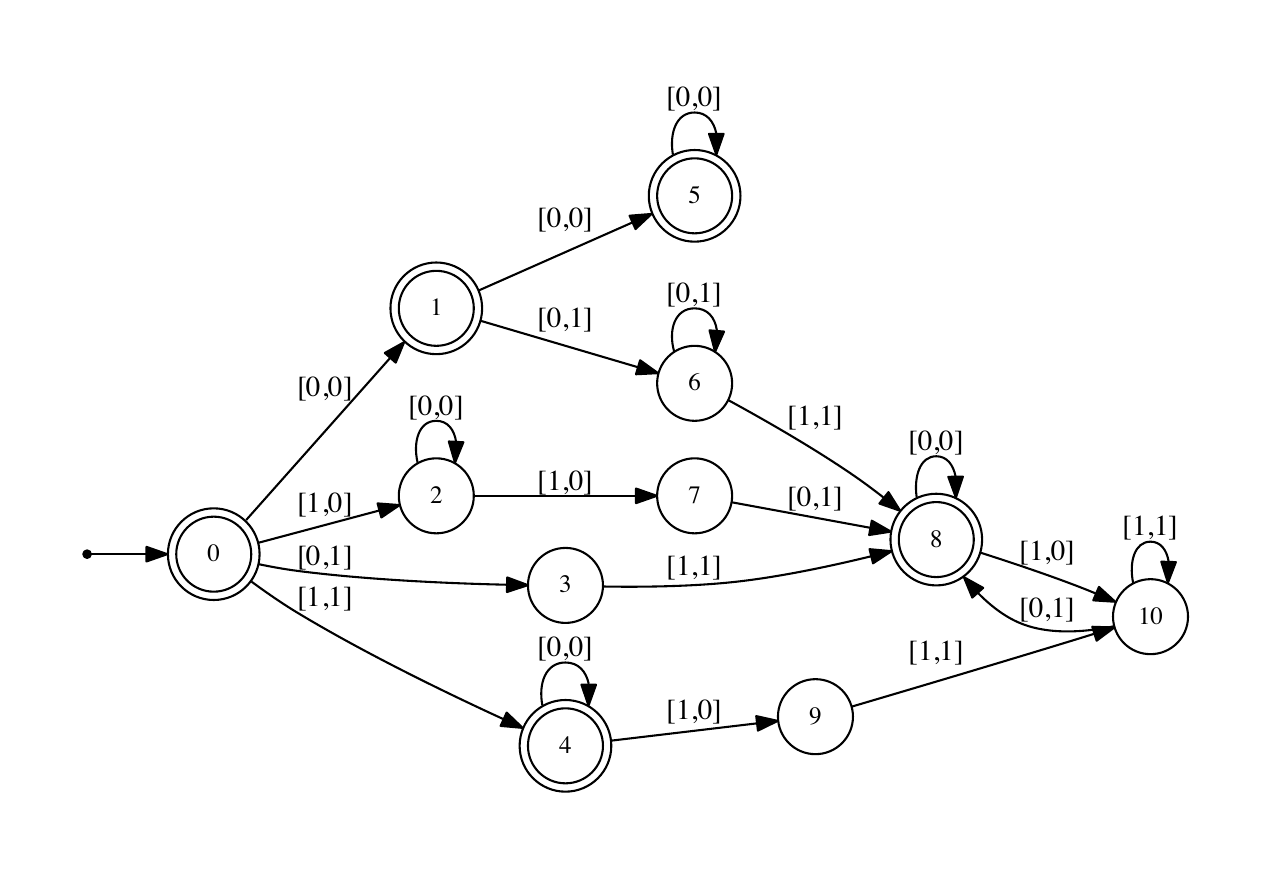}
  \caption{Synchronized automaton {\tt sp\_reg} for starting positions
of runs of the regular paperfolding sequence.}
        \label{fig7}
    \end{minipage} 
    \quad
    \begin{minipage}{0.47\textwidth}
        \centering
        \includegraphics[height=1.5in]{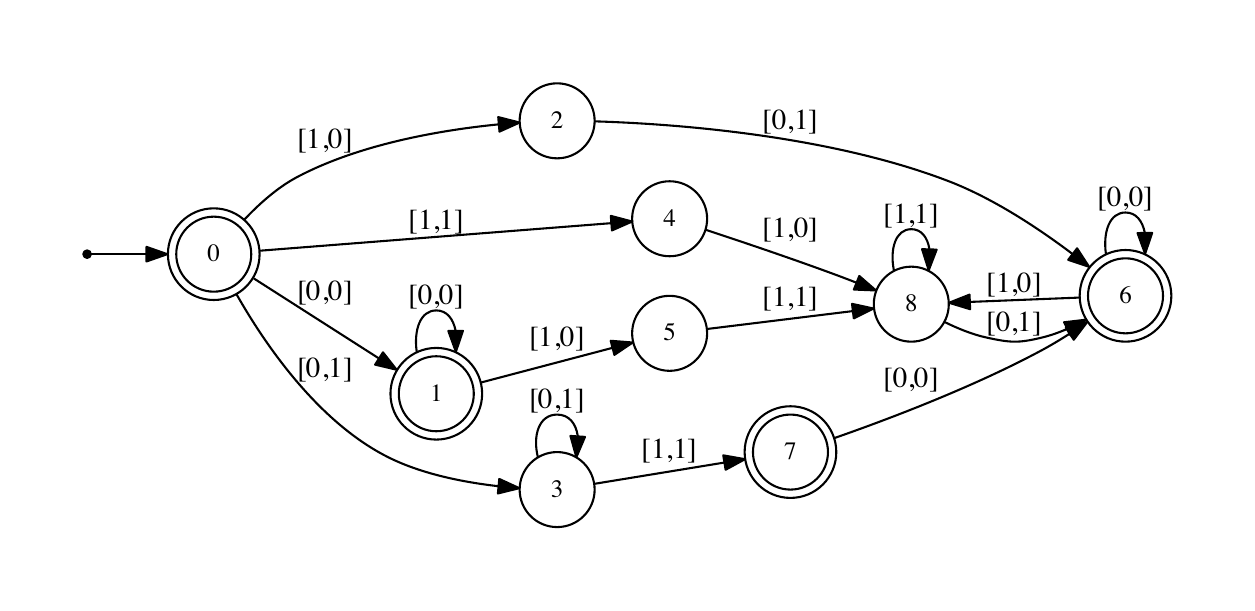}
\caption{Synchronized automaton {\tt ep\_reg} for ending positions
of runs of the regular paperfolding sequence.}
        \label{fig8}
    \end{minipage}
\end{figure}

Once we have these automata, we can easily recover many of the
results of \cite{Bunder&Bates&Arnold:2024}, such as
their Theorem 3.2.   For example they proved that
if $n \equiv 1$ (mod $4$), then $h(n) = 2n$.  We can prove this
as follows with {\tt Walnut}:
\begin{verbatim}
eval test32a "?lsd_2 An (n=4*(n/4)+1) => $ep_reg(n,2*n)":
\end{verbatim}
The reader may enjoy constructing {\tt Walnut} expressions to check
the other results of \cite{Bunder&Bates&Arnold:2024}.

Slightly more challenging to prove is the sum property, conjectured by
Hendriks, and given in
\cite[Thm.~4.1]{Bunder&Bates&Arnold:2024}.  We state it as follows:
\begin{theorem}
Arrange the set of positive integers
not in $H := \{ h(n)+1 \, : \, n \geq 0 \}$ in 
increasing order, and let $t(n)$ be the $n$'th such integer, for $n \geq 1$.
Then
\begin{itemize}
\item[(a)] $g(h(i)+1) = 2$ for $i \geq 0$;
\item[(b)] $g(t(2i)) = 3$ for $i \geq 1$;
\item[(c)] $g(t(2i-1)) = 1$ for $i \geq 1$.
\end{itemize}
\end{theorem}

\begin{proof}
The first step is to create an automaton {\tt tt} computing $t(n)$.  Once again,
we guess the automaton from data and then verify its correctness.
It is depicted in Figure~\ref{fig9}.
\begin{figure}[htb]
\begin{center}
\includegraphics[width=5.5in]{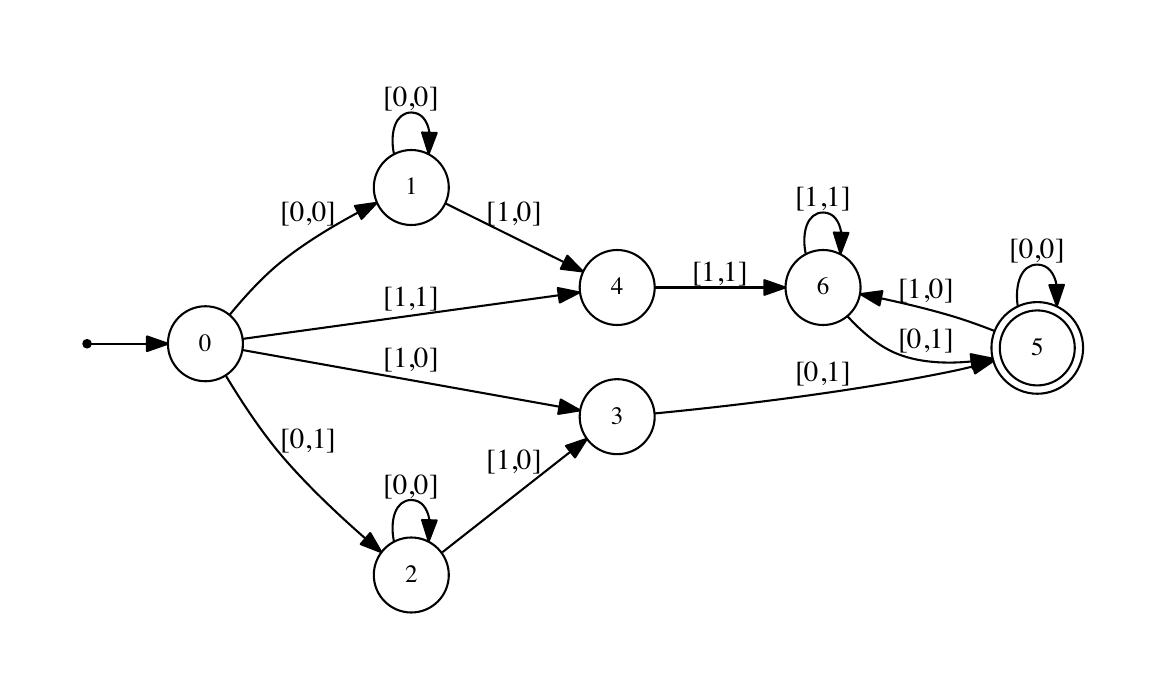}
\end{center}
\caption{The automaton {\tt tt} computing $t(n)$.}
\label{fig9}
\end{figure}

In order to verify its correctness,
we need to verify that {\tt tt} indeed computes a increasing function $t(n)$
and further that the set $\{ t(n) \, : \, n \geq 1 \} =
\{ 1,2, \ldots, \} \setminus H$. We can do this as follows:
\begin{verbatim}
eval tt1 "?lsd_2 An (n>=1) => Ex $tt(n,x)":
# takes a value for all n
eval tt2 "?lsd_2 ~En,x,y n>=1 & x!=y & $tt(n,x) & $tt(n,y)":
# does not take two different values for the same n
eval tt3 "?lsd_2 An,y,z (n>=1 & $tt(n,y) & $tt(n+1,z)) => y<z":
# is an increasing function
eval tt4 "?lsd_2 Ax (x>=1) => 
   ((En n>=1 & $tt(n,x)) <=> (~Em,y $ep_reg(m,y) & x=y+1))":
# takes all values not in H
\end{verbatim}

Now we can verify parts (a)-(c) as follows:
\begin{verbatim}
eval parta "?lsd_2 Ai,x (i>=1 & $ep_reg(i,x)) => RLR[x+1]=@2":
eval partb "?lsd_2 Ai,x (i>=1 & $tt(2*i,x)) => RLR[x]=@3":
eval partc "?lsd_2 Ai,x (i>=1 & $tt(2*i-1,x)) => RLR[x]=@1":
\end{verbatim}
And {\tt Walnut} returns {\tt TRUE} for all of these.  This completes the
proof.
\end{proof}

\section{Connection with continued fractions}

Dimitri Hendriks observed, and Bunder et al.~\cite{Bunder&Bates&Arnold:2024}
proved, a relationship between the sequence of runs for the regular
paperfolding sequence, and the
continued fraction for the real number $\sum_{i \geq 0} 2^{-2^i}$.

As it turns out, however, a {\it much\/} more general result holds; it links
the continued fraction for uncountably many irrational numbers to
runs in the paperfolding sequences.  

\begin{theorem}
Let $n\geq 2$ and $\epsilon_i \in \{ -1, 1\}$ for $2 \leq i \leq n$.
Define
$$\alpha(\epsilon_2, \epsilon_3, \ldots, \epsilon_n) :=
	{1\over 2} + {1 \over 4} + \sum_{2\leq i\leq n} {\epsilon_i}2^{-2^i} .$$
Then the continued fraction for
$\alpha(\epsilon_2, \epsilon_3, \ldots, \epsilon_n)$
is given by $[0, 1, (2R_{1, \epsilon_2, \epsilon_3, \ldots, \epsilon_n})']$,
where the prime indicates that the last term is increased by $1$.

As a consequence, we get that the numbers
$\alpha(\epsilon_2, \epsilon_3,\ldots)$
have continued fraction given by
$[0, 1, 2R_{1, \epsilon_2, \epsilon_3, \ldots}]$.
\label{bba}
\end{theorem}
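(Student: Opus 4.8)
The plan is to prove the finite identity by induction on $n$ using the classical folding lemma for continued fractions (see, e.g., \cite{Dekking&MendesFrance&vanderPoorten:1982}), and then to obtain the infinite statement by a limiting argument. Throughout write $\alpha_n := \alpha(\epsilon_2,\ldots,\epsilon_n)$ and $R^{(n)} := R_{1,\epsilon_2,\ldots,\epsilon_n}$, which has length $k := 2^{n-1}$ by the Proposition. For the base case $n=2$ one checks directly: when $\epsilon_2 = 1$, $R^{(2)} = (2,1)$, $(2R^{(2)})' = (4,3)$, and $[0,1,4,3] = \tfrac{13}{16} = \tfrac12+\tfrac14+\tfrac{1}{16} = \alpha_2$; when $\epsilon_2 = -1$, $R^{(2)} = (1,2)$, $(2R^{(2)})' = (2,5)$, and $[0,1,2,5] = \tfrac{11}{16} = \alpha_2$.

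The inductive step rests on two ingredients. The first is a run-length recursion, read off directly from Eq.~\eqref{fund} (it is essentially the content of the proof of the Proposition): since $P_{1\epsilon_2\cdots\epsilon_n}$ begins with $1$, it ends with $-1$, so the middle symbol $\epsilon_{n+1}$ inserted by Eq.~\eqref{fund} lengthens the final run of $P_{1\epsilon_2\cdots\epsilon_n}$ when $\epsilon_{n+1}=-1$, and lengthens the first run of the mirrored half $-P^R$ when $\epsilon_{n+1}=1$. Writing $R^{(n)} = (r_1,\ldots,r_k)$, this yields
\[
R^{(n+1)} = \begin{cases}
(r_1,\ldots,r_{k-1},\ r_k+1,\ r_k,\ r_{k-1},\ldots,r_1), & \epsilon_{n+1}=-1,\\
(r_1,\ldots,r_{k-1},\ r_k,\ r_k+1,\ r_{k-1},\ldots,r_1), & \epsilon_{n+1}=1,
\end{cases}
\]
which could alternatively be confirmed in {\tt Walnut} via {\tt RL}. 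The second ingredient is the folding lemma: if a rational in lowest terms $p/q$ with $q \geq 1$ has continued fraction $[c_0;c_1,\ldots,c_m]$, then $p/q + (-1)^m/(xq^2) = [c_0;c_1,\ldots,c_m,x,-c_m,\ldots,-c_1]$ for every nonzero integer $x$. I would apply this to $p/q = \alpha_n = [0;1,(2R^{(n)})']$; a short side-lemma (expand $\alpha_n = \tfrac12+\tfrac14+\sum_{2\le i\le n}\epsilon_i 2^{-2^i}$ over the common denominator $2^{2^n}$ and observe that the numerator is odd) shows $q = 2^{2^n}$, so $q^2 = 2^{2^{n+1}}$ is exactly the denominator of the perturbation $\alpha_{n+1}-\alpha_n = \epsilon_{n+1}2^{-2^{n+1}}$. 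Since $[0;1,(2R^{(n)})']$ has $m = 1+2^{n-1}$ partial quotients, which is odd for $n\geq 2$, matching signs forces $x = -\epsilon_{n+1} \in \{-1,1\}$.

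It then remains to normalize the folded continued fraction to one with all positive partial quotients, using the standard identities $[\ldots,b,-1,c,d,e,\ldots] = [\ldots,b-1,-(c-1),-d,-e,\ldots]$ and $[\ldots,a,1] = [\ldots,a+1]$, plus, in the case $x=1$, the identity $[\ldots,c_m,1,-c_m,\ldots,-c_1] = [\ldots,c_m+1,c_m-1,c_{m-1},\ldots,c_1]$. When $\epsilon_{n+1}=1$ (so $x=-1$) the leading $-1$ gets absorbed, decrementing $2r_k+1$ back to $2r_k$ and introducing $2r_k+2$, after which the trailing $\ldots,2r_1,1$ merges to $2r_1+1$; when $\epsilon_{n+1}=-1$ (so $x=1$) the $1,-c_m$ pair merges, producing $2r_k+2,\,2r_k$, again followed by the trailing merge. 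In both cases the result is $[0;1,(2R^{(n+1)})']$ exactly, completing the induction; the bounds $r_i\in\{1,2,3\}$ from Proposition~\ref{prop4} (so $2r_k+1\geq 3$) guarantee that no intermediate partial quotient becomes $0$.

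For the infinite sequence ${\bf f} = 1\epsilon_2\epsilon_3\cdots$ I would pass to the limit: $\alpha_n \to \alpha(\epsilon_2,\epsilon_3,\ldots)$ since $\sum_{i>n}2^{-2^i}\to 0$, while the recursion above shows that the first $2^{n-1}-1$ entries of $R^{(n)}$ stabilize to the run-length sequence $R_{\bf f}$ of the infinite paperfolding sequence, so the finite continued fractions $[0,1,(2R^{(n)})']$ — whose ``prime'' and $\pm1$ corrections sit only at position $\geq 2^{n-1}$ — converge to $[0,1,2R_{\bf f}]$; equating the two limits gives the result. I expect the one genuinely delicate point to be the bookkeeping in the inductive step: getting the parity of $m$ right so that the folding-lemma sign is precisely $\epsilon_{n+1}$, and tracking the ``prime'' through the fold and the normalization so that it lands on the last coordinate of $R^{(n+1)}$ rather than somewhere in the middle.
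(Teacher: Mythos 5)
Your proof is correct and follows essentially the same route as the paper: induction on $n$, combining the run-length doubling recursion read off from Eq.~\eqref{fund} with the folding lemma, and a limiting argument for the infinite case. The only real difference is cosmetic---the paper invokes the symmetric form of the folding lemma (Lemma~\ref{lem1}, where the last quotient splits as $a_t-\epsilon,\ a_t+\epsilon$ and all quotients stay positive), which already packages the normalization of negative partial quotients that you carry out by hand---while you make explicit some bookkeeping the paper leaves implicit (the base case, the denominator $2^{2^n}$ via oddness of the numerator, the parity of the number of partial quotients, and the limit argument).
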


\begin{remark}
The numbers $\alpha(\epsilon_2, \epsilon_3,\ldots)$ were proved transcendental by Kempner \cite{Kempner:1916}.
They are sometimes erroneously called Fredholm numbers, even though
Fredholm never studied them.
\end{remark}

As an example, suppose $(\epsilon_2,\epsilon_3,\epsilon_4,\epsilon_5) = 
(1,-1,-1,1)$.   Then 
$$x(1,-1,-1,1) = 3472818177/2^{32} = [0, 1, 4, 4, 2, 6, 4, 2, 4, 4, 6, 4, 2, 4, 6, 2, 4, 5],$$ while
$R_{1,1,-1,-1, 1} = 2213212232123122$.

To prove Theorem~\ref{bba}, we need the ``folding lemma'':
\begin{lemma}
Suppose $p/q = [0, a_1, a_2,\ldots, a_t]$, $t$ is odd, and
$\epsilon \in \{-1, 1\}$.  Then
$$p/q + \epsilon/q^2 = [0, a_1, a_2, \ldots, a_{t-1}, a_t - \epsilon,
a_t + \epsilon, a_{t-1}, \ldots, a_2, a_1].$$
\label{lem1}
\end{lemma}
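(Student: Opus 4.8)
The plan is to prove Lemma~\ref{lem1} by the classical convergent-and-M\"obius-transformation method; since the statement concerns continued fractions of arbitrary reals, \texttt{Walnut} is not the right tool here. Fix $p/q = [0, a_1, \ldots, a_t]$ with $t$ odd, and let $p'/q' = [0, a_1, \ldots, a_{t-1}]$ and $p''/q'' = [0, a_1, \ldots, a_{t-2}]$ be the two preceding convergents, so that $p = a_t p' + p''$ and $q = a_t q' + q''$. I will use two standard facts. First, the convergent recurrence implies that appending a real number $z$ to a continued fraction acts by a M\"obius transformation: $[0, a_1, \ldots, a_{t-1}, z] = (z p' + p'')/(z q' + q'')$, and similarly with the quotients $a_1,\dots,a_{t-2}$. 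Second, the determinant identity $p_n q_{n-1} - p_{n-1} q_n = (-1)^{n-1}$; because $t$ is odd, this specializes to $p q' - p' q = 1$ and $p'' q' - p' q'' = 1$.

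The first step is to peel the partial quotients $a_1, \ldots, a_{t-1}$ off the right-hand side of the claimed identity, writing
\[
[0, a_1, \ldots, a_{t-1}, a_t - \epsilon, a_t + \epsilon, a_{t-1}, \ldots, a_1]
 = \frac{\gamma\,p' + p''}{\gamma\,q' + q''}, \qquad
\gamma := [\,a_t - \epsilon,\ a_t + \epsilon,\ a_{t-1}, \ldots, a_1\,].
\]
The next step is to put $\gamma$ in closed form. The key input is the reversal property of continuants, which gives $[\,a_t + \epsilon,\ a_{t-1}, \ldots, a_1\,] = \bigl((a_t + \epsilon)q' + q''\bigr)/q'$, since the numerator of $[a_1, \ldots, a_{t-1}, a_t + \epsilon]$ equals the denominator of $[0, a_1, \ldots, a_{t-1}, a_t + \epsilon]$, namely $(a_t+\epsilon)q' + q''$. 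Substituting this into $\gamma = (a_t - \epsilon) + 1/[\,a_t + \epsilon, a_{t-1}, \ldots, a_1\,]$ and using $\epsilon^2 = 1$ to collapse the cross-term, I expect to obtain
\[
\gamma = \frac{a_t^2\, q' + (a_t - \epsilon)\, q''}{(a_t + \epsilon)\, q' + q''}.
\]

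The final step is to substitute this $\gamma$ into the M\"obius transformation and simplify the resulting numerator and denominator separately. Collecting terms and repeatedly rewriting via $p = a_t p' + p''$ and $q = a_t q' + q''$, the common factor $(a_t + \epsilon)q' + q''$ cancels; applying $p'' q' - p' q'' = 1$ then turns the numerator into $pq + \epsilon$ and the denominator into $q^2$, so the right-hand side equals $(pq + \epsilon)/q^2 = p/q + \epsilon/q^2$, as claimed. The one point requiring care is that the right-hand side is a \emph{simple} continued fraction only when $a_t - \epsilon \ge 1$ and $a_t + \epsilon \ge 1$ — i.e.\ when $a_t \ge 2$, or $a_t \ge 1$ with $\epsilon = -1$; in the application to Theorem~\ref{bba} the partial quotients are all of the form $2R_i$ with $R_i \in \{1,2,3\}$, so this holds automatically. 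I do not expect a genuine obstacle here: once the reversal identity is in place the computation is short, and the only "hard" part is the bookkeeping. A shorter but equivalent route is to verify the corresponding identity for the products of matrices $\bigl(\begin{smallmatrix} a & 1 \\ 1 & 0 \end{smallmatrix}\bigr)$, but the convergent argument above is already self-contained.
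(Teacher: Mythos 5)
Your argument is correct, but it is genuinely different in character from what the paper does: the paper does not prove Lemma~\ref{lem1} at all, it simply cites the ``folding lemma'' from Dekking, Mend\`es France, and van der Poorten (p.~177) and points to earlier papers of Shallit for the underlying ideas. What you supply is a self-contained verification by the standard convergent machinery: writing the right-hand side as the M\"obius image $(\gamma p'+p'')/(\gamma q'+q'')$ of the tail $\gamma=[a_t-\epsilon,a_t+\epsilon,a_{t-1},\ldots,a_1]$, evaluating $\gamma$ via the mirror formula $[a_t+\epsilon,a_{t-1},\ldots,a_1]=((a_t+\epsilon)q'+q'')/q'$ (continuant symmetry), and then simplifying with $p=a_tp'+p''$, $q=a_tq'+q''$ and the determinant identity $p''q'-p'q''=1$, which is where the hypothesis that $t$ is odd enters. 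I checked the algebra: the denominator collapses to $(a_tq'+q'')^2=q^2$ and the numerator to $pq+\epsilon(p''q'-p'q'')=pq+\epsilon$, so the computation is sound; the identity $pq'-p'q=1$ you also record is never actually used. Your closing caveat about when the folded expansion is a legal simple continued fraction (needing $a_t-\epsilon\ge 1$) is well taken and is harmless in the paper's application, where all partial quotients of $2R$ are at least $2$. The trade-off is the usual one: the paper's citation keeps the exposition short and defers to the literature (where essentially this same continuant/matrix computation appears), while your version makes the note self-contained at the cost of a page of bookkeeping; either is acceptable, and your write-up could serve as an appendix-style proof if self-containment were desired.
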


\begin{proof}
See \cite[p.~177]{Dekking&MendesFrance&vanderPoorten:1982}, although
the general ideas can also be found in
\cite{Shallit:1979,Shallit:1982b}.
\end{proof}

We can now prove Theorem~\ref{bba} by induction.
\begin{proof}
From Lemma~\ref{lem1} we see that
if $\alpha(\epsilon_2, \epsilon_3, \ldots, \epsilon_n) 
= [0, 1, a_2, \ldots, a_t]$
then
$$\alpha(\epsilon_2, \epsilon_3, \ldots, \epsilon_n, \epsilon_{n+1})
= [0, 1, a_2, \ldots, a_{t-1}, a_t - \epsilon_{n+1},
a_t + \epsilon_{n+1}, a_{t-1}, a_{t-2}, \ldots, a_3, a_2+1] .$$

Now
$F_{1, \epsilon_2, \epsilon_3, \ldots, \epsilon_n}$
always ends in $-1$.
Write
$R_{1, \epsilon_2, \epsilon_3, \ldots, \epsilon_n} 
= b_1 b_2 \cdots b_t$.  Then
$$R_{1, \epsilon_2, \ldots, \epsilon_n, \epsilon_{n+1}}
= b_1 \cdots b_{t-1}, b_t +1, b_{t-1}, \ldots, b_1$$
if $\epsilon_{n+1} = -1$ (because we extend the
last run with one more $-1$) and
$$R_{1, \epsilon_2, \ldots, \epsilon_n, \epsilon_{n+1}}
= b_1 \cdots b_{t-1}, b_t, b_t+1, b_{t-1}, \ldots, b_1$$
if $\epsilon_{n+1} =1$.

Suppose
\begin{align*}
\alpha(\epsilon_2, \epsilon_3, \ldots, \epsilon_n) &= 
[0, 1, (2R_{1, \epsilon_2, \epsilon_3, \ldots, \epsilon_n})'] \\
&= [0, 1, a_2, \ldots, a_t ],
\end{align*}
and let
$R_{1, \epsilon_2, \ldots, \epsilon_n} = b_1 b_2 \cdots b_{t-1}$.
Then
\begin{align*}
\alpha(\epsilon_2, \epsilon_3, \ldots, \epsilon_n, \epsilon_{n+1} )
&= [0, 1, a_2, \ldots, a_{t-1}, a_t - \epsilon_{n+1}, a_t + \epsilon_{n+1},
	a_{t-1}, \ldots, a_3, a_2+1]  \\
&= [0, 1, 2b_1, \ldots, 2b_{t-2}, 2b_{t-1} + 1 - \epsilon_{n+1},
2b_{t-1} + 1 + \epsilon_{n+1}, 2b_{t-2}, \ldots, 2b_2, 2b_1,1] \\
&= [0, 1, 2b_1, \ldots, 2b_{t-2}, 2b_{t-1} + 1 - \epsilon_{n+1}, 
2b_{t-1} + 1 + \epsilon_{n+1}, 2b_{t-2}, \ldots, 2b_2, 2b_1 + 1]  \\
&= [0, 1, 2R_{1, \epsilon_2, \ldots, \epsilon_{n+1}}'] ,
\end{align*}
as desired.
\end{proof}

\section{Acknowledgments}

I thank Jean-Paul Allouche and Narad Rampersad for their helpful comments.

\end{document}